\newtheorem{theorem}{Theorem}
\newtheorem*{theorem*}{Theorem}
\newtheorem{lemma}{Lemma}
\theoremstyle{remark}
\newtheorem{remark}{Remark}
\newcommand{\C}{\mathbb{C}}
\newcommand{\D}{\Omega}
\newcommand{\ep}{\varepsilon}
\newcommand{\Dc}{\overline{\Omega}}
\newcommand{\dbar}{\overline{\partial}}
\newcommand{\zb}{\overline{z}}
\newcommand{\xb}{\overline{\xi}}
\newcommand{\wb}{\overline{w}}
\title{Essential norm estimates for Hankel operators on convex 
domains in $\C^2$}
\author{\v{Z}eljko \v{C}u\v{c}kovi\'c}
\author{S\"{o}nmez \c{S}ahuto\u{g}lu}
\email{Zeljko.Cuckovic@utoledo.edu, Sonmez.Sahutoglu@utoledo.edu}
\address{University of Toledo, Department of Mathematics \& Statistics, 
Toledo, OH 43606, USA}
\subjclass[2010]{Primary  47B35; Secondary 32W05}
\keywords{Essential norm, Hankel operators, convex domains}
\date{\today}
\begin{document}
\begin{abstract}
Let $\D\subset \C^2$ be a bounded convex domain with $C^1$-smooth boundary 
and $\varphi\in C^1(\Dc)$ such that $\varphi$ is harmonic on the nontrivial 
disks in the boundary. We estimate the essential norm of the Hankel operator 
$H_{\varphi}$ in terms of the $\dbar$ derivatives of $\varphi$ ``along'' the 
nontrivial disks in the boundary.  
\end{abstract}

\onehalfspace
\maketitle
 
Let $\D$ be a domain in $ \C^n$ for $n\geq 1$ and $b\D$ denote the boundary of 
$\D$. Furthermore, let $dV$ denote the volume measure on $\D$ and $A^2(\D)$ be 
the Bergman space on $\D$, the space of square integrable holomorphic functions 
on $\D$ with respect to $dV$.  The Bergman projection $P$ is the orthogonal 
projection from $L^2(\D)$ onto $A^2(\D)$. For $\varphi\in L^{\infty}(\D)$ we 
define the Hankel operator $H_{\varphi}:A^2(\D)\to L^2(\D)$ by 
\[H_{\varphi}f=(I-P)(\varphi f)\] 
where $I$ denotes the identity operator on $L^2(\D)$. 

In \cite{CuckovicSahutoglu09} we studied compactness of Hankel operators 
on smooth bounded pseudoconvex domains with the symbols smooth up to the 
boundary.  Our most complete result is attained on smooth bounded convex domains
in $\C^2$. On such domains we characterize compactness of $H_{\varphi}$ in 
terms of the behavior of $\varphi$ on the analytic disks in $b\D$.  Throughout 
this paper $\mathbb{D}$ will denote the unit open disk in $\C$. 

\begin{theorem*}[\cite{CuckovicSahutoglu09}]
 Let $\D$ be a smooth bounded convex domain in $\C^2$ and 
 $\varphi\in C^{\infty}(\Dc)$. 
Then $H_{\varphi}$ is compact  if and only if $\varphi\circ F$ is holomorphic 
for all holomorphic $F:\mathbb{D}\to b\D$.
\end{theorem*}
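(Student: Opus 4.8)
The plan is to reduce everything to the canonical solution operator for $\dbar$. Since $f\in A^2(\D)$ is holomorphic and $P(\varphi f)\in A^2(\D)$, we have $\dbar(H_{\varphi}f)=\dbar(\varphi f)=f\dbar\varphi$, and $f\dbar\varphi$ is $\dbar$-closed because $\dbar f=0$ and $\dbar\dbar\varphi=0$. As $H_{\varphi}f=(I-P)(\varphi f)$ is orthogonal to $A^2(\D)$, it is exactly the minimal-norm solution of $\dbar u=f\dbar\varphi$, so with $N$ the $\dbar$-Neumann operator on $(0,1)$-forms we get the basic identity $\|H_{\varphi}f\|^2=\langle N(f\dbar\varphi),f\dbar\varphi\rangle$. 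I would also record at the outset the geometric normalization: if $F:\mathbb{D}\to b\D$ is a nontrivial holomorphic disk, then a complex linear functional $L$ defining a supporting hyperplane at a point $F(\zeta_0)$ makes $\operatorname{Re}(L\circ F)$ harmonic, bounded above by $0$, with an interior maximum, so by the maximum principle $L\circ F$ is constant and the disk lies in an affine complex line in $b\D$. After an affine change of coordinates I may assume the disk sits in $\{z_2=0\}$, that $\D\subset\{\operatorname{Re}z_2<0\}$, and that $F(\zeta)=(\zeta,0)$; then $\dbar(\varphi\circ F)=\dfrac{\partial\varphi}{\partial\overline{z}_1}(\zeta,0)$, so the hypothesis ``$\varphi\circ F$ holomorphic'' says precisely that the complex-tangential component $\partial\varphi/\partial\overline{z}_1$ of $\dbar\varphi$ vanishes along every disk.

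For the ``if'' direction I would show compactness by verifying $\langle N(f_k\dbar\varphi),f_k\dbar\varphi\rangle\to 0$ whenever $f_k\rightharpoonup 0$ weakly with $\|f_k\|\le 1$. The point is that in $\C^2$ the Levi form of a convex domain is positive definite off the union of the boundary disks, and degenerates exactly in the complex-tangential direction along the disks. On a relatively open strictly pseudoconvex piece of $b\D$, $N$ satisfies a subelliptic ($\tfrac12$) estimate, which localizes to a genuine compactness estimate for the contribution of $f_k\dbar\varphi$ there. On a piece clustering at the flat/disk locus no such gain is available, but there the only obstruction comes from the component of $f_k\dbar\varphi$ in the degenerate direction $d\overline{z}_1$, and by the paragraph above $\partial\varphi/\partial\overline{z}_1$ vanishes on the disks; using $\varphi\in C^1(\Dc)$ this component is uniformly small in a neighborhood of the disks. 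Patching the two regimes with a partition of unity and absorbing the first-order commutators coming from the cutoffs, I would conclude that the quadratic form tends to $0$, hence $H_{\varphi}$ is compact.

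For the ``only if'' direction I would argue by contraposition: assume $\varphi\circ F$ is not holomorphic for some disk, so after the normalization above $\partial\varphi/\partial\overline{z}_1(\zeta_0,0)\ne 0$, say at $\zeta_0=0$. The lower bound I would exploit is that for any $(0,1)$-form $\alpha\in\operatorname{dom}(\dbar^*)$, $\|H_{\varphi}f\|\ge|\langle f\dbar\varphi,\alpha\rangle|/\|\dbar^*\alpha\|$, which follows from $\langle f\dbar\varphi,\alpha\rangle=\langle\dbar H_{\varphi}f,\alpha\rangle=\langle H_{\varphi}f,\dbar^*\alpha\rangle$. The goal is then to produce a weakly null sequence $f_k\in A^2(\D)$ with $\|f_k\|\le 1$ and forms $\alpha_k$ for which the right-hand side stays bounded away from $0$. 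I would take $f_k$ and $\alpha_k$ both concentrating on the disk, modelled on the local product geometry (a disk in $z_1$ times a half-plane in $z_2$): functions of the form $f_k(z)=c_k\,g(z_1)e^{k z_2}$, which are holomorphic, satisfy $|e^{kz_2}|=e^{k\operatorname{Re}z_2}$ and so concentrate their $L^2$ mass on $\{z_2=0\}\cap b\D$ as $k\to\infty$ and converge weakly to $0$ after normalization; and $\alpha_k=\overline{h_k}\,d\overline{z}_1$ chosen so that $\langle f_k\dbar\varphi,\alpha_k\rangle$ detects the nonvanishing of $\dfrac{\partial\varphi}{\partial\overline{z}_1}(z_1,0)$ at $0$ while $\|\dbar^*\alpha_k\|$ stays bounded.

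The hard part will be this last construction, and specifically reconciling the boundary behaviour of the test objects with the $\dbar^*$-boundary condition: a form $\alpha_k$ localized near the flat disk must still lie in $\operatorname{dom}(\dbar^*)$ and have controlled $\|\dbar^*\alpha_k\|$, yet the pairing has to survive the $k\to\infty$ concentration without being washed out by the $e^{kz_2}$ decay. I expect to handle this by rescaling in the $z_2$-variable to the half-plane model, where the computation becomes an explicit one-dimensional Bergman-space estimate on the disk $\{z_2=0\}$ whose size is governed by $\partial\varphi/\partial\overline{z}_1$; a normal-families/limiting-domain argument then transfers the model lower bound back to $\D$. On the sufficiency side the analogous difficulty is the uniform smallness and patching near the flat boundary, where $N$ gains nothing and one must rely entirely on the vanishing of $\partial\varphi/\partial\overline{z}_1$ supplied by the hypothesis.
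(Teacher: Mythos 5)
Your skeleton is the right one and matches the strategy behind this result (reduction to the $\dbar$-Neumann operator via $\|H_{\varphi}f\|^2=\langle N(f\dbar\varphi),f\dbar\varphi\rangle$, the observation that disks in a convex boundary are affine, a concentration argument for necessity, and localization away from the disks for sufficiency), but the ``if'' direction has two genuine gaps. First, the claim that the Levi form of a smooth convex domain in $\C^2$ is positive definite off the union of the boundary disks is false: the convex domain $\{|z_1|^2+|z_2|^4<1\}$ has no analytic disk in its boundary, yet its Levi form degenerates along $\{z_2=0\}\cap b\D$. So there is no ``strictly pseudoconvex piece'' on which to run a subelliptic $\tfrac12$-estimate; the correct input is the Fu--Straube theory for convex domains (property $(P)$ on the complement of the disks), which yields a compactness estimate for $(1-\chi_{\ep})N$ where $\chi_{\ep}$ is a cutoff near the closed union of the disks --- exactly the device used in the upper-bound proof in this paper. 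Second, and more seriously, your assertion that near the disks ``the only obstruction comes from the component of $f_k\dbar\varphi$ in the degenerate direction $d\overline{z}_1$'' is precisely what needs proof, and it fails for the raw symbol: take $\varphi=\overline{z}_2$ on a convex domain whose only boundary disk lies in $\{z_2=0\}$. Then $\varphi\circ F$ is constant for every disk, so $H_{\varphi}$ must be compact, yet $\dbar\varphi=d\overline{z}_2$ vanishes nowhere and $N$ gains nothing near the disk, so no smallness of the form is available in the normal direction. The missing step is the decomposition $\varphi=\varphi_3+\varphi_4$ with $\varphi_4=0$ on $b\D$ (so $H_{\varphi_4}$ is compact by a Montel argument) and $\overline{L}_2\varphi_3=0$ on $b\D$, which kills the complex-normal component of $\dbar\varphi_3$ at the boundary and reduces everything to the tangential component that your hypothesis actually controls. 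Without this (or an equivalent device) the sufficiency argument does not close.

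The ``only if'' direction is essentially sound and close to the argument reproduced in the lower-bound part of Theorem 1 here: your duality bound $\|H_{\varphi}f\|\geq|\langle f\dbar\varphi,\alpha\rangle|/\|\dbar^*\alpha\|$ with $\alpha$ supported away from $b\D$ in the normal variable is the same integration by parts against the cutoff $\gamma$ on the disk, and your peak functions $c_k\,g(z_1)e^{kz_2}$ play the role of the $f_j=2^{-j}w^{-\alpha_j}$ used in the paper. The one quantitative point you must still verify is that the mass ratio $\|f_k\|_{L^2(\text{near the disk})}/\|f_k\|_{L^2(\D)}$ stays bounded below as $k\to\infty$; this is what keeps the pairing from being washed out, and it requires the geometric fact (supplied by convexity) that a wedge, uniform over the disk, in the normal variable sits inside $\D$. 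You correctly identify this as the technical heart, and your plan for it is viable.
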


In this paper we continue our study of compactness of Hankel operators and 
obtain estimates on their essential norms.  The essential norm $\|T\|_e$ of 
a bounded linear operator  $T:X\to Y$ where $X$ and $Y$ are normed linear 
spaces, is defined as 
\[\|T\|_e=\inf \Big\{\|T-K\|:K:X\to Y \text{ is a  compact linear 
operator}\Big\}.\]
That is, the essential norm of $T$  is the distance from $T$ to the  subspace 
of compact operators. 

The first estimates for the essential norms of Hankel operators were 
obtained by Lin and Rochberg \cite{LinRochberg93} in 1993, for the case of the 
Bergman space on $\mathbb{D}$. They showed that the essential norm 
estimates of $H_{\varphi}$, acting on $A^2(\mathbb{D})$, are analogous to the 
estimates on the Hardy space which is a famous 
theorem of Adamjan, Arov and Kre\u{\i}n \cite{AdamjanArovKrein71}.  
The Lin-Rochberg results were later generalized by Asserda \cite{Asserda00} to 
higher dimensions the case the domain is a strongly pseudoconvex. 

As in \cite{CuckovicSahutoglu09} our approach uses the connection between 
Hankel operators and the $\dbar$-Neumann operator. Due to this connection, we 
are able to consider more general domains; however, our symbols are more 
restricted. As a result, our estimates are of different type than Lin and 
Rochberg's estimates. In our case, the estimates depend on the behavior of the 
symbol on the analytic disks in the boundary of domains. We note that an 
analytic disk in the boundary of $\D$ is the image of a holomorphic function 
$F:\mathbb{D}\to b\D$. 

Before we state our main result we define $\Gamma_{b\D}$, the set of all linear 
parametrizations of ``circular'' affine nontrivial analytic disks in $b\D$, as 
follows:   
\[\Gamma_{b\D}=\Big\{F:\mathbb{D}\to b\D: F(\xi)=\xi z+p \text{ for some } 
p\in b\D, z\in \C^n\setminus \{0\}\Big\}.\]
We note that in case there are no nontrivial affine disks in the boundary of 
$\D$, the set  $\Gamma_{b\D}$ is empty.

In the main result below and the rest of the paper, $f_z$ and $f_{\zb}$ denote  
the derivative of $f$ with respect to $z$ and $\zb$ respectively. 

\begin{theorem}\label{ThmMain}
 Let $\D$ be a $C^1$-smooth bounded convex domain in $\C^2,\tau_{\D}$ 
denote the diameter of $\D$, and $\varphi\in C^1(\Dc)$ such that 
$\varphi \circ F$ is harmonic for every 
holomorphic $F:\mathbb{D}\to b\D$. Then the Hankel operator 
$H_{\varphi}$ satisfies the following essential norm estimate: 
\[\sup_{F\in \Gamma_{b\D}}\left\{\frac{|F'(0)|}{\sqrt{2}\tau_{\D}}
\inf_{\xi\in \mathbb{D}}\left\{ |(\varphi \circ F)_{\xb}(\xi)| \right\} \right\}
\leq \left\|H_{\varphi}\right\|_e
\leq  \sup_{F\in \Gamma_{b\D}}\left\{\frac{\sqrt{e}\tau_{\D}}{|F'(0)|} 
\sup_{\xi\in \mathbb{D}}\left\{ |(\varphi \circ F)_{\xb}(\xi)| 
\right\} \right\}.\]
\end{theorem}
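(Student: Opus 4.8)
The plan is to establish the two inequalities separately, since they are genuinely different in character. Both rely on the connection between Hankel operators and the $\dbar$-Neumann theory exploited in \cite{CuckovicSahutoglu09}, together with a localization idea: test the operator against functions concentrated near a given disk $F\in\Gamma_{b\D}$.

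For the \emph{lower bound}, I would proceed by constructing explicit test functions that detect the $\dbar$-derivative of $\varphi$ along a fixed disk. Fix $F\in\Gamma_{b\D}$ with $F(\xi)=\xi z+p$, and fix $\xi_0\in\mathbb{D}$ realizing (approximately) the infimum. The idea is to build a sequence of unit-norm holomorphic functions $f_j\in A^2(\D)$ that concentrate mass near the point $F(\xi_0)\in b\D$ along the disk. Since for any compact $K$ the quantity $\|H_\varphi f_j\|$ must approach $\|(H_\varphi-K)f_j\|$ as $f_j\to 0$ weakly, a lower bound on $\liminf_j\|H_\varphi f_j\|$ gives a lower bound on $\|H_\varphi\|_e$. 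The key computation is that $H_\varphi f_j=(I-P)(\varphi f_j)$ and, since $P(\varphi f_j)$ contributes the ``holomorphic part,'' the non-holomorphic residue is governed to leading order by $\dbar\varphi$ tangent to the disk, i.e.\ by $(\varphi\circ F)_{\xb}$. The geometric factors $|F'(0)|=|z|$ and $\tau_{\D}$ enter through rescaling the test functions to have unit $L^2$ norm on $\D$ and through the size of the region over which concentration occurs; the constant $\sqrt{2}$ comes from estimating $\|\dbar\|$ against the full gradient. I expect the construction of these concentrating test functions and verifying their weak convergence to zero to be the technically delicate part of the lower bound.

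For the \emph{upper bound}, I would instead exhibit, for each $\ep>0$, a compact operator $K$ approximating $H_\varphi$ with $\|H_\varphi-K\|$ bounded by the stated supremum plus $\ep$. The natural strategy is to write $H_\varphi=P^\perp M_\varphi$ and use the Kohn formula $P^\perp=\dbar^* N\dbar$ on $(0,1)$-forms, where $N$ is the $\dbar$-Neumann operator, so that $H_\varphi f=\dbar^* N(\dbar\varphi\wedge f)$ up to the holomorphy of $f$; more precisely $\dbar(\varphi f)=(\dbar\varphi)f$ for holomorphic $f$. The compactness obstruction is localized at the boundary disks, so one cuts off away from $\bigcup_{F}F(\overline{\mathbb{D}})$ using a compactly supported cutoff (producing a compact operator) and estimates the remaining piece by the boundary behavior of $\dbar\varphi$. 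The bound $\|\dbar^* N\|$ on the relevant forms is what produces the factor $\tau_{\D}/|F'(0)|$ and the constant $\sqrt{e}$; here I anticipate needing a quantitative estimate for the norm of $N$ (or of $\dbar^* N$) on convex domains, expressible through the diameter $\tau_{\D}$, of the type used in the original compactness proof.

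The harmonicity hypothesis on $\varphi\circ F$ is what makes the estimate clean: it forces $(\varphi\circ F)_{\xi\xb}=0$, so that $(\varphi\circ F)_{\xb}$ is antiholomorphic in $\xi$ and hence $|(\varphi\circ F)_{\xb}|$ has no interior maxima or minima beyond those dictated by the boundary, tightening both the $\inf$ and the $\sup$ over $\mathbb{D}$ and ensuring the two sides of the inequality measure the same quantity. The main obstacle I foresee is matching the geometric normalization on the two sides: the lower bound must use test functions scaled so that the disk parameter $\xi$ corresponds correctly to arclength on $\D$, while the upper bound must control $\dbar^* N$ uniformly; reconciling the two so that both the factor $|F'(0)|$ (appearing in the numerator below and the denominator above) and the diameter $\tau_{\D}$ emerge with the correct powers is where I expect the bookkeeping to be most demanding.
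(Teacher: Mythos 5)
Your outline identifies the right general machinery (test functions degenerating weakly to zero for the lower bound; the Kohn formula $H_\varphi=\dbar^*NM_{\dbar\varphi}$, a cutoff away from the boundary disks, and the diameter bound $\|\dbar^*N\|\leq\sqrt{e}\,\tau_\D$ for the upper bound), but both halves have concrete gaps. For the lower bound, concentrating unit-norm functions at a single point $F(\xi_0)$ realizing the infimum is not what the argument requires and is left entirely unconstructed; the paper instead takes $f_j(z,w)=2^{-j}w^{-\alpha_j}$ with $\alpha_j\uparrow 1$, which are large near the \emph{whole} disk $D\times\{0\}$, places a thin wedge $W$ so that $D\times W\subset\D$, and pairs $H_{\varphi_1}f_j$ against the weight $\gamma=\chi/\varphi_{1\zb}$ over $D$. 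The infimum appears through $\|\gamma_{\zb}\|_D\leq\|\chi_z\|_D/\inf_D|\varphi_{1\zb}|$, and this step is exactly where the harmonicity hypothesis is used: it kills the term $\chi\,\varphi_{1z\zb}/(\varphi_{1\zb})^2$ in $\gamma_z$, and one also needs the integration-by-parts identity $\|\gamma_z\|=\|\gamma_{\zb}\|$ for compactly supported $\gamma$ (Lemma \ref{Lem1}). Your explanation of harmonicity (antiholomorphy of $(\varphi\circ F)_{\xb}$ ``tightening'' the inf and sup) is not the mechanism, and the constant $\sqrt{2}$ is the explicit ratio $\int_D\chi\,dV/\|\chi_z\|_D=V(D)/\sqrt{2\pi}$ for the particular bump $\chi$, not an estimate of $\dbar$ against the full gradient. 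Also missing is the splitting $\varphi=\varphi_1+\varphi_2$ with $\varphi_1(z,w)=\varphi(z,0)$ and the verification that $\|H_{\varphi_2}f_j\|\to 0$, without which the localization to the disk is not justified.

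For the upper bound, applying $\|\dbar^*N\|\leq\sqrt{e}\,\tau_\D$ to $M_{\chi_\ep\dbar\varphi}$ directly does not give the stated right-hand side: near the boundary $\dbar\varphi=\overline{L}_1(\varphi)\overline{\omega}_1+\overline{L}_2(\varphi)\overline{\omega}_2$, and the complex-normal component $\overline{L}_2(\varphi)$ is not controlled by $(\varphi\circ F)_{\xb}$ and need not be small, so your estimate would involve the full $|\dbar\varphi|$ on the disks, which is strictly larger than the claimed supremum. The paper removes this component by decomposing $\varphi=\varphi_3+\varphi_4$ with $\varphi_4=0$ on $b\D$ (so $H_{\varphi_4}$ is compact) and $\overline{L}_2(\varphi_3)=0$ on $b\D$, leaving only $|\overline{L}_1(\varphi)|$ to estimate; the factor $1/|F'(0)|$ then comes from the chain rule converting the unit tangential derivative $\overline{L}_1(\varphi)$ into $(\varphi\circ F)_{\xb}$, not from the norm of $\dbar^*N$. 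Without these two ingredients --- the disk-wide weighted pairing with the harmonicity/integration-by-parts step on one side, and the normal/tangential splitting of the symbol on the other --- the proposal does not yield either inequality.
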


\begin{remark}
Both estimates  in the theorem above are defined to be zero 
in case $\Gamma_{b\D}=\emptyset$. That is, in case there are no  nontrivial 
analytic disks in $b\D$ we get $\|H_{\varphi}\|_e=0$. This is in 
accordance with the fact that, in this case, $H_{\varphi}$ is compact. 
\end{remark}

\begin{remark}
$F'(0)$ measures the size of the disk $F(\mathbb{D})\subset b\D$. So 
it is interesting that the essential norm depends on the ``bar'' derivatives of 
$\varphi$ on the disks in the boundary as well as the size of these disks.
\end{remark}

In case of the bidisk we get a better estimate for the lower bound as in 
the following theorem. 

\begin{theorem}\label{ThmBidisk}
 Let $\varphi\in C^1(\overline{\mathbb{D}^2})$ such that the functions $z\to 
\varphi(z,e^{i\theta})$ and $w\to \varphi(e^{i\theta},w)$ are harmonic on 
$\mathbb{D}$ for all $\theta\in [0,2\pi]$. Then the Hankel operator 
$H_{\varphi}$ satisfies the following essential norm estimate: 
\[ \left\|H_{\varphi}\right\|_e  \geq \sup_{F\in 
\Gamma_{b\mathbb{D}^2}}\left\{\frac{|F'(0)|}{\sqrt{2}}
\inf_{\xi\in \mathbb{D}}\left\{ |(\varphi \circ F)_{\xb}(\xi)| \right\} \right\}
\]
\end{theorem}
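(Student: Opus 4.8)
The plan is to bound the essential norm from below by exhibiting, for each disk $F\in\Gamma_{b\mathbb{D}^2}$, a norm-one weakly null sequence $\{g_n\}\subset A^2(\mathbb{D}^2)$ whose images under $H_\varphi$ have $\liminf_n\|H_\varphi g_n\|$ at least the $F$-term on the right. Indeed, for any compact $K$ one has $\|Kg_n\|\to0$, so $\|H_\varphi-K\|\ge\limsup_n\|(H_\varphi-K)g_n\|=\limsup_n\|H_\varphi g_n\|$; taking the infimum over $K$ and then the supremum over $F$ gives the stated inequality. An elementary classification shows every nontrivial affine disk in $b\mathbb{D}^2$ has one coordinate constant of modulus one, so up to interchanging the variables it suffices to treat $F(\xi)=(\lambda,a\xi+b)$ with $|\lambda|=1$, $a\ne0$, and $|a|+|b|\le 1$.

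First I would exploit the product structure. Write $P=P_1\otimes P_2$ for the Bergman projection on $\mathbb{D}^2$ (the tensor of the one-variable projections) and let $k_\zeta$ denote the normalized Bergman kernel of $\mathbb{D}$. Set $g_n(z,w)=k_{\zeta_n}(z)\,k_b(w)$ with $\zeta_n\to\lambda\in\partial\mathbb{D}$; these are holomorphic unit vectors with $g_n\rightharpoonup0$. The crux is the exact decoupling $(I-P)\bigl(k_{\zeta_n}\otimes q\bigr)=k_{\zeta_n}\otimes(I-P_2)q$, which holds because $k_{\zeta_n}$ is holomorphic in $z$ (so $P_1k_{\zeta_n}=k_{\zeta_n}$). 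Combined with $\|\varphi g_n-\psi\,g_n\|\to0$, where $\psi(w):=\varphi(\lambda,w)$ — a consequence of the uniform continuity of $\varphi$ and of $|k_{\zeta_n}|^2\,dA\to\delta_\lambda$ — this yields
\[
\lim_n\|H_\varphi g_n\|=\|(I-P_2)(\psi\,k_b)\|,
\]
reducing the estimate to a one-variable Hankel quantity for the harmonic symbol $\psi$. It is exactly here that the bidisk does better than Theorem \ref{ThmMain}: the factorization $P=P_1\otimes P_2$ makes the reduction exact, with no diameter factor.

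Next I would prove the one-variable estimate $\|(I-P_2)(\psi\,k_{w_0})\|\ge\frac{1}{\sqrt2}\,(1-|w_0|^2)\,|\psi_{\bar w}(w_0)|$ for every $w_0\in\mathbb{D}$. Writing the harmonic function as $\psi=h_1+\overline{h_2}$ with $h_1,h_2$ holomorphic, the holomorphic part drops out, $(I-P_2)(\psi f)=(I-P_2)(\overline{h_2}f)$, and $|\psi_{\bar w}|=|h_2'|$. Using the weighted composition unitaries $U_\phi f=(f\circ\phi)\phi'$ for $\phi\in\mathrm{Aut}(\mathbb{D})$, which satisfy $U_\phi P_2U_\phi^{-1}=P_2$ and hence $U_\phi(I-P_2)M_\psi U_\phi^{-1}=(I-P_2)M_{\psi\circ\phi}$, and choosing $\phi$ with $\phi(0)=w_0$ so that $U_\phi k_{w_0}$ is a unimodular multiple of $k_0\equiv1$, the problem reduces to $w_0=0$, $f\equiv1$. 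There $(I-P_2)(\overline{h_2})=\overline{h_2-h_2(0)}$, so $\|(I-P_2)\overline{h_2}\|^2=\sum_{k\ge1}|c_k|^2/(k+1)\ge \tfrac12|c_1|^2=\tfrac12|h_2'(0)|^2$, the constant $1/\sqrt2$ coming from $\|w\|/\|1\|=1/\sqrt2$ in $A^2(\mathbb{D})$; transporting back by the chain rule reinstates the factor $|\phi'(0)|=1-|w_0|^2$.

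Finally I would match the constants. Applying the two displays with $w_0=b$ gives $\|H_\varphi\|_e\ge\frac{1}{\sqrt2}(1-|b|^2)|\psi_{\bar w}(b)|$. Since $|a|+|b|\le1$ forces $1-|b|^2\ge|a|^2$, and $b\in a\mathbb{D}+b$ gives $|\psi_{\bar w}(b)|\ge\inf_{w\in a\mathbb{D}+b}|\psi_{\bar w}(w)|$, while the chain rule yields $|(\varphi\circ F)_{\bar\xi}(\xi)|=|a|\,|\psi_{\bar w}(a\xi+b)|$ and $|F'(0)|=|a|$, the lower bound dominates $\frac{|F'(0)|}{\sqrt2}\inf_\xi|(\varphi\circ F)_{\bar\xi}(\xi)|$; taking the supremum over $F$ (both coordinate families by symmetry) finishes the proof. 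I expect the main obstacle to be this one-variable estimate: securing the sharp constant $1/\sqrt2$ together with the correct interior weight $1-|w_0|^2$, which is precisely what the Möbius covariance of $(I-P_2)M_\psi$ and the normalized reproducing kernels provide.
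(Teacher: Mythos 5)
Your argument is correct, but it takes a genuinely different route from the paper's. The paper recycles the machinery of Theorem \ref{ThmMain}: it takes $f_j=1\otimes k_{p_j}$ with $p_j$ tending to a boundary point of the factor transverse to the disk, and extracts the lower bound by a weighted integration by parts on the boundary disk --- pairing the identity $\partial_{\zb}H_{\varphi}f_j=f_j\varphi_{\zb}$ against $\gamma=\chi/\varphi_{\zb}$ for an explicit bump $\chi$, with Lemma \ref{Lem1} plus harmonicity converting $\|\gamma_{\zb}\|$ into $\|\chi_z\|/\inf|\varphi_{\zb}|$; the constant $1/\sqrt{2}$ arises as $\int\chi/\|\chi_z\|=V(D)/\sqrt{2\pi}$. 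You instead exploit the tensor factorization $P=P_1\otimes P_2$ to reduce $\lim_n\|H_\varphi g_n\|$ \emph{exactly} to the one-variable quantity $\|(I-P_2)(\psi k_b)\|$, which you bound below by M\"obius covariance and a Taylor-coefficient computation; harmonicity enters through the splitting $\psi=h_1+\overline{h_2}$ rather than through Lemma \ref{Lem1}, and your $1/\sqrt{2}$ comes from $\|w\|/\|1\|$ in $A^2(\mathbb{D})$. Each step you sketch checks out: the classification of affine disks in $b\mathbb{D}^2$, the decoupling $(I-P)(k_{\zeta_n}\otimes q)=k_{\zeta_n}\otimes(I-P_2)q$, the concentration $\|(\varphi-\psi)g_n\|\to 0$, the covariance $U_\phi(I-P_2)M_\psi U_\phi^{-1}=(I-P_2)M_{\psi\circ\phi}$, and the final bookkeeping $1-|b|^2\ge |a|\ge |a|^2=|F'(0)|^2$. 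What your route buys is a cleaner and in places sharper intermediate estimate, namely $\|H_\varphi\|_e\ge\tfrac{1}{\sqrt{2}}(1-|w_0|^2)|\psi_{\wb}(w_0)|$ for \emph{every} point $w_0$ of the boundary disk, not just an infimum over the disk; what it costs is generality --- the exact decoupling and the M\"obius covariance are tied to the product structure of $\mathbb{D}^2$ and have no analogue on a general convex domain, whereas the paper's integration-by-parts argument is designed to degrade gracefully to the wedge construction of Theorem \ref{ThmMain}.
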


\begin{remark}
The diameter  of the bidisk $\tau_{\mathbb{D}^2}=2\sqrt{2}$ is the 
distance between $(-1,-1)$ and $(1,1)$. Hence  
$\sqrt{2}\tau_{\mathbb{D}^2}=4>\sqrt{2}$. Thus the lower 
bound in Theorem \ref{ThmBidisk} is better than the one in 
Theorem \ref{ThmMain}. 
\end{remark}

\section*{Proofs of Theorem \ref{ThmMain} and \ref{ThmBidisk}}

\begin{lemma}\label{Lem1}
 Let $\gamma\in C^{1}_0(U)$ where $U\subset \mathbb{D}$ is a domain. 
 Then $\|\gamma_{\xi}\|=\|\gamma_{\xb}\|$.
\end{lemma}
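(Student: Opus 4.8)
The plan is to prove the squared identity $\|\gamma_\xi\|^2=\|\gamma_{\xb}\|^2$, where $\|\cdot\|$ denotes the $L^2$-norm with respect to area measure $dA$; the lemma then follows by taking square roots, both quantities being finite since $\gamma\in C^1_0(U)$. Writing $\xi=x+iy$ and recalling the Wirtinger derivatives $\gamma_\xi=\frac12(\gamma_x-i\gamma_y)$ and $\gamma_{\xb}=\frac12(\gamma_x+i\gamma_y)$, a direct pointwise computation gives $|\gamma_\xi|^2-|\gamma_{\xb}|^2=-\operatorname{Im}(\gamma_x\overline{\gamma_y})$. For real-valued $\gamma$ this already vanishes pointwise, so the substance of the lemma lies in the complex-valued case, where the two integrands differ and only their integrals over $U$ are meant to agree.

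First I would settle the case $\gamma\in C^\infty_0(U)$ by integration by parts. Because $\gamma$ has compact support all boundary terms vanish, so integrating by parts once in $\xi$ and using $\overline{\gamma_\xi}=\partial_{\xb}\bar\gamma$ yields $\|\gamma_\xi\|^2=\int_U\gamma_\xi\,\overline{\gamma_\xi}\,dA=-\int_U\gamma\,\partial_\xi\partial_{\xb}\bar\gamma\,dA$. The identical computation applied to $\gamma_{\xb}$ gives $\|\gamma_{\xb}\|^2=-\int_U\gamma\,\partial_{\xb}\partial_\xi\bar\gamma\,dA$. Since the mixed second-order derivatives of a smooth function commute, the two right-hand sides coincide, proving the identity for smooth $\gamma$. (Equivalently, under smoothness one may observe that $\int_U\operatorname{Im}(\gamma_x\overline{\gamma_y})\,dA=0$ because $\int_U d\gamma\wedge d\bar\gamma=\int_U d(\gamma\,d\bar\gamma)=0$ by Stokes' theorem and compact support.)

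The main obstacle is that the statement only assumes $\gamma\in C^1_0(U)$, whereas the integration by parts above produced second-order derivatives of $\gamma$. I would close this gap by approximation: take mollifications $\gamma_n\in C^\infty_0(U)$ with $\gamma_n\to\gamma$ in the $C^1$-norm, their supports contained in a fixed compact subset of $U$ for all large $n$. Each $\gamma_n$ satisfies $\|(\gamma_n)_\xi\|=\|(\gamma_n)_{\xb}\|$ by the preceding paragraph, and since both sides of this equality depend only on the first-order derivatives of $\gamma_n$ and are continuous with respect to $C^1$-convergence on a fixed compact set, letting $n\to\infty$ gives $\|\gamma_\xi\|=\|\gamma_{\xb}\|$. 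Apart from this regularity point the argument is entirely routine.
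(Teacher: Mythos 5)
Your proof is correct and follows essentially the same route as the paper's: integration by parts with no boundary terms thanks to compact support, followed by equality of the mixed second partials. The one substantive difference is that you notice the integration by parts produces second derivatives of $\gamma$, which is only assumed to be $C^1$, and you repair this by mollifying on a fixed compact subset of $U$; the paper's own proof silently performs the same integration by parts under the $C^1$ hypothesis, so your approximation step is a welcome (and strictly speaking necessary) addition rather than a detour.
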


\begin{proof} Since $\gamma$ is compactly supported in $U$  there are no
boundary terms in the following integration by parts formula.
\begin{align*} 
\|\gamma_{\xi}\|^2=\int_U\gamma_{\xi}(\xi)\overline{\gamma}_{\xb}(\xi) dV(\xi)
=\int_U\gamma(\xi)\overline{\gamma}_{\xb \xi}(\xi)dV(\xi) 
=\int_U\gamma_{\xb}(\xi)\overline{\gamma}_{\xi} (\xi)dV(\xi) 
=\|\gamma_ { \xb } \|^2.
\end{align*}
Therefore, $\|\gamma_{\xi}\|=\|\gamma_{\xb}\|$.
\end{proof}
 
 We note that a unitary affine mapping $F$ on $\C^n$ is of the form $F(z)=Az+p$ 
where $A$ is a $n\times n$ unitary matrix and $p\in \C^n$.
 \begin{lemma}\label{Lem2}
Let $V$ be a bounded domain in $\C^n,F$ be a unitary affine mapping, and 
$\phi\in L^{\infty}(V)$. Then 
$\|H_{\phi}\|_e=\|H_{\phi\circ F}\|_e$ where $H_{\phi\circ F}$ is the Hankel 
operator (with symbol $\phi\circ F$) on $A^2(F^{-1}(V))$.
 \end{lemma}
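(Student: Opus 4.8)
The plan is to show that composing with a unitary affine map $F$ induces a unitary equivalence between the two Hankel operators up to a multiplicative constant of modulus one, which in particular preserves essential norms since unitary conjugation maps compact operators to compact operators bijectively. First I would set $W=F^{-1}(V)$ and define the composition operator $C_F:L^2(V)\to L^2(W)$ by $C_F g=g\circ F$. The key observation is that because $F(z)=Az+p$ with $A$ unitary, the complex Jacobian of $F$ is $\det A$, a unimodular constant; hence $|\det F'|\equiv 1$ and the change of variables $\zeta=F(z)$ gives $\int_V|g|^2\,dV=\int_W|g\circ F|^2\,dV$, so $C_F$ is an isometry. Since $F$ is a bijection, $C_F$ is in fact a unitary operator with inverse $C_{F^{-1}}$.

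Next I would verify the three compatibility relations that make the diagram commute. Because $F$ is holomorphic with holomorphic inverse, $C_F$ maps $A^2(V)$ unitarily onto $A^2(W)$; consequently the Bergman projections intertwine, $C_F P_V=P_W C_F$, equivalently $C_F(I-P_V)=(I-P_W)C_F$. Multiplication also intertwines in the obvious way: $C_F(M_\phi g)=(\phi\circ F)(g\circ F)=M_{\phi\circ F}C_F g$, so $C_F M_\phi=M_{\phi\circ F}C_F$. Combining these,
\[
C_F H_\phi = C_F(I-P_V)M_\phi = (I-P_W)C_F M_\phi = (I-P_W)M_{\phi\circ F}C_F = H_{\phi\circ F}\,C_F,
\]
where on the right $H_{\phi\circ F}$ acts on $A^2(W)$. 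Restricting $C_F$ to $A^2(V)$, this reads $H_{\phi\circ F}=C_F H_\phi C_F^{-1}$ as operators, with $C_F$ unitary.

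Finally I would pass to essential norms. For any compact $K:A^2(V)\to L^2(V)$, the operator $C_F K C_F^{-1}$ is compact on the $W$ side, and conversely every compact operator on the $W$ side arises this way; moreover $\|C_F(H_\phi-K)C_F^{-1}\|=\|H_\phi-K\|$ since $C_F$ is unitary on both source and target. Taking the infimum over compact $K$ therefore yields $\|H_{\phi\circ F}\|_e=\|H_\phi\|_e$. The argument is essentially a verification; the only point requiring genuine care is the intertwining $C_F P_V=P_W C_F$ of the Bergman projections, which hinges on $C_F$ carrying $A^2(V)$ exactly onto $A^2(W)$ as a \emph{unitary} map, and this is precisely where unimodularity of $\det A$ (hence of $\det F'$) is used.
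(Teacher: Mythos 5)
Your proposal is correct and follows essentially the same route as the paper: both use the pull-back $g\mapsto g\circ F$ as a unitary between the relevant $L^2$ and Bergman spaces (unitarity coming from $|\det A|=1$), intertwine the Bergman projections (you derive this from unitarity of the restriction to $A^2$, the paper cites Bell's transformation formula, which amounts to the same thing here), and then transport compact operators across the conjugation to equate essential norms. No gaps.
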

\begin{proof} 
Let $U=F^{-1}(V)$ and the pull-back $F^*:A^2(V)\to A^2(U)$ be defined as 
$F^*(f)=f\circ F$ for $f\in A^2(V)$. Then one can check that $F^*$ is an 
isometry. Furthermore, the Bergman kernel transformation formula of 
Bell (see, \cite[Proposition 6.1.7]{JarnickiPflugBook}) 
$P^V=(F^{-1})^*P^UF^*$ where $P^U,P^V$ are the Bergman projections on $U$ and 
$V$, respectively.  Then for $f\in A^2(V)$ we have 
\begin{align*}
 (F^{-1})^*H_{\phi\circ F}F^*(f)&= (F^{-1})^*H_{\phi\circ F}(f\circ F)\\
&=  (F^{-1})^*\Big(\phi(F)f(F)- P^U(\phi(F)f(F))\Big)\\
&=\phi f- (F^{-1})^*P^UF^*(\phi f)\\
&=\phi f- P^V(\phi f)\\
&=H_{\phi}(f)
\end{align*}
 Also $T^V:A^2(V)\to L^2(V)$ is a compact linear operator if and only if 
$T^U:A^2(U)\to L^2(U)$ is compact where $T^V= (F^{-1})^*T^UF^*$. 
 Furthermore, 
 \[ \left\|H_{\phi}-T^V\right\|
 =\left\|(F^{-1})^*H_{\phi\circ F}F^*-(F^{-1})^*T^UF^*\right\|
 =\left\|H_{\phi\circ F}-T^U\right\|.\]
One can check that, the equality above implies that 
$\left\|H_{\phi}\right\|_e=\left\|H_{\phi\circ F}\right\|_e$.
\end{proof}

We will use the $\dbar$-Neumann problem to obtain the upper bound in 
Theorem \ref{ThmMain}. The $\dbar$-Neumann operator, denoted by $N$, is defined 
as the solution operator for the complex Laplacian 
$\dbar\dbar^*+\dbar^*\dbar$ on square integrable 
$(0,1)$-forms on $\D$,  denoted by $L^2_{(0,1)}(\D)$. We refer the reader to 
the books \cite{ChenShawBook,StraubeBook} and references there in, for more 
information about the $\dbar$-Neumann problem. In the following theorem we 
list the properties we need  about $N$ 
(see \cite[Theorem 4.4.1]{ChenShawBook}). 
\begin{theorem*}
 Let $\D$ be a bounded pseudoconvex domain in $\C^n$ for $n\geq 2$. There 
exists a bounded self-adjoint operator $N:L^2_{(0,1)}(\D) \to L^2_{(0,1)}(\D)$ 
such that 
\begin{itemize}
 \item[i.] $(\dbar^*\dbar+\dbar\dbar^*)N=I$ on $L^2_{(0,1)}(\D)$,
 \item[ii.] $\dbar^*N$ is the solution operator to 
$\dbar u=v$ that produces solutions orthogonal to $A^2(\D)$,  
 \item[iii.] the Bergman projection $P$ satisfies the following equality 
 \[P=I-\dbar^*N\dbar\]
 where $I$ is the identity mapping,
\item[iv.] the operators $\dbar N,\dbar^* N, \dbar\dbar^*N,$ and $\dbar^*\dbar 
N$ are bounded and 
\[\|N\| \leq e\tau_{\D}^2,\quad  \|\dbar N\|\leq \sqrt{e}\tau_{\D},\quad  
\|\dbar^* N\|\leq \sqrt{e}\tau_{\D}\] 
where $\tau_{\D}$ is the diameter of $\D$.
 \end{itemize}
\end{theorem*}

We note that ii. and iii. in the theorem above imply that $\dbar 
H_{\varphi}f=f\dbar\varphi$ for $\varphi\in C^1(\Dc)$ and $f\in A^2(\D)$.

\begin{remark}
Before we start the proof of Theorem \ref{ThmMain} we note that 
even though \cite[Corollary 2]{CuckovicSahutoglu09} is stated for 
$\varphi\in C^{\infty}(\Dc)$, observation of the proof reveals that it is 
enough to have $C^1$-smooth domain $\D$  and $\varphi\in C^1(\Dc)$.
\end{remark}

\begin{proof}[Proof of Theorem \ref{ThmMain}]
First we  will prove the lower bound. Since $\D$ is a $C^1$-smooth bounded 
convex domain in $\C^2$, \cite[Corollary 2]{CuckovicSahutoglu09} implies that  
$H_{\varphi}$ is compact if and only if $\varphi \circ F$ is holomorphic for 
any holomorphic $F:\mathbb{D}\to b\D$. Thus, in order to find the essential 
norm estimate, without loss of generality, we assume that there exists 
holomorphic $F:\mathbb{D}\to b\D$ such that 
$(\varphi\circ F)_{\xb} \not\equiv 0$. Since $\varphi$ is $C^1$-smooth, this 
means that $\varphi_{\zb}(F)\neq 0$ on some open set. But the domain $\D$ is 
convex which implies that the disk $F(\mathbb{D})$ is an affine disk (see 
\cite{FuStraube98,CuckovicSahutoglu09}). Using Lemma \ref{Lem2} we can thus 
assume that there exists $\tau_0\in (0,\tau_{\D})$ such that
\begin{itemize}
\item[i.]  $\varphi_{\zb}(z,0)\neq 0$ for all $|z|\leq \tau_0$, 
 \item[ii.] $\{(z,w)\in \C^2:|z|\leq \tau_0, w=0\} \subset b\D$.
\end{itemize}
Since $\D$ is bounded we can also deduce that 
\begin{itemize}
\item[iii.] $\D\subset \{z\in \C:|z|<\tau_{\D}\}\times \{w\in \C:|w|< 
\tau_{\D}, \text{Re}(w)>0\}$. 
\end{itemize}
With this setup, we can now put a wedge $W$ in $\D$ perpendicular to  $D=\{z\in 
\C:|z|<\tau_0\}$. Furthermore, $W$ can be chosen as close to flat as we want if 
we are willing to choose its radius very small. That is, for any $\ep_1>0$ there 
exists $r_0>0$ so that $D\times W\subset \D$ where 
\[W=\left\{re^{i\theta}\in \C:0\leq 
r<r_0,|\theta|<\frac{\pi-\ep_1}{2}\right\}.\]
 Let us choose 
\[\chi(z)=\frac{2}{\pi \tau_0^2}\left(1-\frac{|z|^2}{\tau_0^2}\right) \text{ 
for } z\in \overline{D}.\]
Then $\chi\in C^{\infty}(\overline{D}), \chi\geq 0, \chi(z)=0$ for 
$|z|=\tau_0$. Then we have 
\[\int_D\chi(z) dV(z)
=\frac{2}{\pi \tau_0^2} 2\pi \int_0^{\tau_0}
\left(\rho-\frac{\rho^3}{\tau_0^2}\right)d\rho
=\frac{4}{\tau_0^2}
\left(\frac{\tau_0^2}{2}-\frac{\tau_0^4}{4\tau_0^2}\right)=1,\]
and 
\[\|\chi_z\|^2=\frac{4}{\pi^2 \tau_0^4}
\int_D\frac{|z|^2}{\tau_0^4}dV(z)=\frac{4}{\pi^2\tau_0^8}2\pi
\int_0^{\tau_0}\rho^3d\rho=\frac{8}{\pi 
\tau_0^8}{\frac{\tau_0^4}{4}}=\frac{2}{\pi \tau_0^4}.\]
Hence
\[\frac{\int_D\chi(z)dV(z)}{\|\chi_z\|}=\tau_0^2\sqrt{\frac{\pi}{2}}
=\frac{V(D)}{\sqrt{2\pi}}.\]
Let us first restrict $\varphi$ onto $D$ and extend the 
restriction as a  $C^1$-smooth function $\phi_1$ defined on  
$\C\times\{0\}$. Finally, we extend the function  $\phi_1$ trivially as a 
$C^1$-smooth function $\varphi_1$ on $\C^2$. That is, 
$\varphi_1(z,w)=\varphi(z,0)$.  
Let us define $\varphi_2=\varphi-\varphi_1$ and 
\[\gamma(z)=\frac{\chi(z)}{\varphi_{1\zb}(z,0)} \text{ for } z\in 
\overline{D}.\] 
where $\varphi_{1\zb}$ denotes $\frac{\partial \varphi_{1}}{\partial 
\zb}$. We will continue to use this notation below when appropriate. 

We note that, in the rest of the proof $\|.\|$ and $\|.\|_U$ denote the $L^2$ 
norm on $\D$ and on open set $U$, respectively. 

Let us define $\alpha_j=1-2^{-2j-1}$ and 
\[f_j(z,w)=\frac{1}{2^jw^{\alpha_j}} \text{ for } (z,w)\in \D.\] 
Using polar coordinates one can show that 
\begin{align}\label{Eqn0}
\|f_j\|_{W}= \sqrt{\pi-\ep_1}r_0^{1-\alpha_j} \text{ and }   
\|f_j\| \leq  \pi \tau_{\D}^{2-\alpha_j}. 
\end{align}
We will use the following equality in the second equality in 
\eqref{Eqn5} below.  
\[\frac{\partial H_{\varphi_1}f_j}{\partial \zb}d\zb+\frac{\partial 
H_{\varphi_1}f_j}{\partial \wb}d\wb=\dbar 
H_{\varphi_1}f_j=\dbar(\varphi_1f_j-P(\varphi_1f_j))=f_j\dbar\varphi_1
=f_j\frac{\partial \varphi_1}{\partial \zb}d\zb.\]
Then,  for $w\in W$ we have 
\begin{align}
 \nonumber \frac{1}{2^jw^{\alpha_j}} \int_{D}\chi(z)dV(z) 
 =& \int_{D} f_j(z,w)\frac{\partial \varphi_{1}}{\partial 
\zb}(z,w)\gamma(z)dV(z)\\
\label{Eqn5}=& \int_{D} \frac{\partial H_{\varphi_1}f_j}{\partial 
\zb}(z,w)\gamma(z) dV(z)\\
\nonumber =& - \int_{D}H_{\varphi_1}f_j(z,w)\gamma_{\zb}(z)dV(z).
\end{align}
We note that in the last equality above we used integration by parts and the 
fact that $\gamma(z)=0$ for $|z|=\tau_0$. 

Now we take the absolute values of both sides of \eqref{Eqn5}  and then apply 
the Cauchy-Schwarz inequality to the right hand side to get 
\[ |f_j(0,w)|\int_{D} \chi(z) dV(z) \leq \left\|H_{\varphi_1}f_j\right\|_{D} 
\|\gamma_{\zb}\|_{D}.\]
After integrating over the wedge $W$ and dividing by $\|\gamma_{\zb}\|_{D}$
we get
\[\|f_j(0,.)\|_W \frac{\int_{D}\chi(z) dV(z)}{\|\gamma_{\zb}\|_{D}} \leq 
\left\|H_{\varphi_1}f_j\right\|_{D\times W}
\leq \left\|H_{\varphi_1}f_j\right\|.\]
We remind the reader that $\varphi_{1\zb}$ and $\varphi_{1z\zb}$ below 
will denote $\frac{\partial \varphi_{1}}{\partial \zb}$ and 
$\frac{\partial^2\varphi_{1}}{\partial z\partial \zb}$ respectively.
Since we assumed that $\varphi$ is harmonic on $D$, Lemma \ref{Lem1} implies 
that 
\begin{align*}
 \|\gamma_{\zb}\|_{D}=  \|\gamma_{z}\|_{D}= \left\|
\frac{\chi_z}{\varphi_{1\zb}}-\chi
\frac{\varphi_{1z\zb}}{(\varphi_{1\zb})^2}\right\|_{D}=\left\|
\frac{\chi_z}{\varphi_{1\zb}}\right\|_{D}\leq
\frac{\|\chi_z\|_{D}}{\inf_{D}|\varphi_{1\zb}|}.
\end{align*}
Then
\begin{align}\label{Eqn1}
 \left\|H_{\varphi_1}f_j\right\| \geq 
\frac{\int_{D}\chi(z) dV(z)}{\|\gamma_{\zb}\|_{D}}\|f_j(0,.)\|_W\geq
\frac{\int_{D}\chi(z)dV(z)}{\|\chi_z\|_{D}}
\left(\inf_{D}|\varphi_{1\zb}|\right) 
\|f_j(0,.)\|_W.
\end{align}
Therefore, inequality \eqref{Eqn1} and the fact that 
$\|f_j(0,.)\|_W=\sqrt{\pi-\ep_1}r_0^{1-\alpha_j}$ imply that 
\begin{align} \label{Eqn2}
\left\|H_{\varphi_1}f_j\right\| \geq r_0^{1-\alpha_j} 
\sqrt{\frac{\pi-\ep_1}{2\pi}} 
V(D) \left(\inf_{D}|\varphi_{1\zb}|\right).
\end{align}

Now we turn to $\varphi_2$. Since $\varphi_2(z,0)=0$, for every $\ep>0$ there 
exists $\delta>0$ and $j_{\ep}$ so that
\begin{itemize}
 \item[i.]  $|\varphi_2(z,w)|<\ep$ for $(z,w)\in \Dc$ and $|w|\leq \delta$,
 \item[ii.] $|f_j(z,w)|<\ep$ for  $(z,w)\in \Dc, |w|\geq \delta$ and 
 $j\geq j_{\ep}$. 
\end{itemize}
Let us denote $\D_{1,\delta}=\{(z,w)\in \D:|w|< \delta\}$ and 
$\D_{2,\delta}=\{(z,w)\in \D:|w|> \delta\}$. Then for $j\geq  j_{\ep}$ we have 
\begin{align*}
 \left\|H_{\varphi_2}f_j\right\| &\leq \|\varphi_2f_j\|\\
 &=\|\varphi_2f_j\|_{\D_{1,\delta}}+ \|\varphi_2f_j\|_{\D_{2,\delta}}\\
 &\leq \ep(\|f_j\|+\|\varphi_2\|)\\
 &\leq \ep (\pi \tau_{\D}^{2-\alpha_j}+\|\varphi_2\|).
\end{align*}
Then, $\limsup_{j\to \infty}\left\|H_{\varphi_2}f_j\right\|\leq \ep(\pi 
\tau_{\D}+\|\varphi_2\|)$. Since $\ep$ is arbitrary, we get
\begin{align}\label{Eqn3}
\lim_{j\to \infty}\left\|H_{\varphi_2}f_j\right\|= 0. 
\end{align}
By the definition of essential norms for Hankel operators, for any $\ep_2>0$ 
there exists a compact operator $K_{\ep_2}:A^2(\D)\to L^2(\D)$ such that 
\[\left\|H_{\varphi}\right\|_e
\geq \left\|H_{\varphi}-K_{\ep_2} \right\|-\ep_2.\] 
Then 
\begin{align}
\nonumber \left\|H_{\varphi}\right\|_e &\geq 
\limsup_{j\to\infty}\frac{\left\|H_{\varphi}f_j-K_{\ep_2} 
f_j\right\|}{\pi \tau_{\D}^{2-\alpha_j}}-\ep_2\\
 \label{Eqn4}&\geq \limsup_{j\to\infty}\frac{\left\|H_{\varphi_1}f_j\right\|- 
\left\|H_{\varphi_2}f_j\right\| -\left\|K_{\ep_2} f_j\right\|}{\pi 
\tau_{\D}^{2-\alpha_j}}-\ep_2 
\\
\nonumber &=\limsup_{j\to\infty}\frac{\left\|H_{ \varphi_1}f_j\right\|}{\pi 
\tau_{\D}^{2-\alpha_j}}-\ep_2.
\end{align}
In the last equality we used \eqref{Eqn3}, compactness of 
$K_{\ep_2}$, and the fact that $f_j\to 0$ weakly.  
 Therefore, combining \eqref{Eqn2} and  \eqref{Eqn4} together with the 
fact that the constants $\ep_1,\ep_2>0$ are arbitrary we get 
\[ \left\|H_{\varphi}\right\|_e \geq 
\frac{1}{\sqrt{2}\pi \tau_{\D}}\sup_{D\subset b\D}\{V(D)
\inf_{\xi\in D}\{ |\varphi_{\zb}(\xi)| \} \}. \] 
We note that there is a one-to-one correspondence between the (affine) disks 
in $b\D$ and $F\in \Gamma_{b\D}$. Since we need $F:\mathbb{D}\to D$ to be a 
surjection  we must have $F(\xi)=(\tau_0\xi,0)$. Then one can show that 
\[V(D)\inf_{\xi\in D}\{ |\varphi_{\zb}(\xi)| \}= \pi|F'(0)| \inf_{\xi\in
\mathbb{D}}\{ |(\varphi \circ F)_{\xb}(\xi)| \}.\]
Therefore, we have 
\[ \left\|H_{\varphi}\right\|_e \geq 
\sup_{F\in\Gamma_{b\D}} \left\{\frac{|F'(0)|}{\sqrt{2}\tau_{\D}} 
\inf_{\xi\in\mathbb{D}}\left\{ |(\varphi \circ F)_{\xb}(\xi)| \right\} 
\right\}.\] 

Now we turn to the upper estimate.  Let $\rho$ be a defining function 
for $\D$. That is, $\rho$ is a $C^1$-smooth function in a neighborhood of 
$\Dc$  such that $\rho<0$ on $\D$, $\rho>0$ on $\C^2\setminus \Dc$, and 
$|\nabla \rho|\neq 0$ on $b\D$.  Then we define the complex tangential and 
complex normal vector fields as 
\begin{align*}
 L_1=&\frac{2\sqrt{2}}{\|\nabla \rho\|}\left(\frac{\partial \rho}{\partial 
w}\frac{\partial}{\partial 
z}-\frac{\partial \rho}{\partial z}\frac{\partial}{\partial w}\right)\\ 
 L_2=&\frac{2\sqrt{2}}{\|\nabla \rho\|}\left(\frac{\partial \rho}{\partial 
\zb}\frac{\partial}{\partial z}+
\frac{\partial \rho}{\partial \wb}\frac{\partial}{\partial w}\right).
 \end{align*}
 One can check that $\{L_1,L_2\}$ form a continuous orthonormal basis for 
the space of $(1,0)$ type vector fields on a neighborhood on $b\D$.
Let $\omega_1$ and $\omega_2$ be the differential forms of type $(1,0)$ that 
are the dual to $L_1$ and $L_2$, respectively.  That is, 
\begin{align*}
\omega_1=& \frac{\sqrt{2}}{\|\nabla \rho\|}\left(\frac{\partial \rho}{\partial 
\wb}dz-\frac{\partial \rho}{\partial \zb}dw\right)\\
\omega_2=&\frac{\sqrt{2}}{\|\nabla \rho\|}\left(\frac{\partial \rho}{\partial 
z}dz+\frac{\partial \rho}{\partial w}dw\right).
\end{align*}
One can check that $\|\omega_1\|=\|\omega_2\|=1$ and  $\dbar 
f=\overline{L}_1(f)\overline{\omega}_1+\overline{L}_2(f)\overline{\omega}_2$ 
for any $f\in C^1(\Dc)$ (see special boundary charts in 
\cite[p. 12]{StraubeBook}).

Using the method in the first part of the proof of Theorem 3 in 
\cite[p. 3739--3740]{CuckovicSahutoglu09} 
($\widetilde{\beta}$ and $\widehat{\beta}$ in \cite{CuckovicSahutoglu09} 
correspond to $\varphi_3$ and $\varphi_4$ below, respectively) we define 
$\varphi_3,\varphi_4\in C^1(\Dc)$ such that 
\begin{itemize}
 \item[i.] $\varphi=\varphi_3+\varphi_4$,
 \item[ii.] $\varphi_3=\varphi$  and $\overline{L}_2(\varphi_3)=0$ on $b\D$,
 \item[iii.] $\varphi_4=0$ on $b\D$. 
\end{itemize}
We note that $\varphi_4$ is a uniform limit of compactly supported smooth 
functions on $\D$. This fact together with Montel's Theorem  
imply that $H_{\varphi_4}$ is a limit of compact operators in the operator 
norm. Hence $H_{\varphi_4}$ is compact and 
$\|H_{\varphi}\|_e=\|H_{\varphi_3}\|_e$.  

Let 
\[\Pi=\overline{\bigcup_{F\in \Gamma_{b\D}}F(\mathbb{D})}\]
and $\chi_{\ep}\in C^{\infty}(\Dc)$ 
such that $0\leq \chi_{\ep}\leq 1, \chi_{\ep}=1$ on $\Pi_{\ep}=\{z\in \Dc: 
d(z,\Pi)\leq \ep\},$ and $\chi_{\ep}=0$ on $\Dc\setminus \Pi_{2\ep}$. 
Then for $f\in A^2(\D)$ we have 
\[H_{\varphi_3}= \dbar^*N M_{\dbar\varphi_3}= \dbar^*N 
M_{\chi_{\ep}\dbar\varphi_3}+ \dbar^*N M_{(1-\chi_{\ep})\dbar\varphi_3}\]
where $M_h$ denotes the multiplication by $h$. 
First we will show that $\dbar^*N M_{(1-\chi_{\ep})\dbar\varphi_3}$ is compact 
on $A^2(\D)$. Let $f\in A^2(\D)$. 
\begin{align*}
 \|\dbar^*N f(1-\chi_{\ep})\dbar\varphi_3\|^2&=\langle \dbar^*N 
f(1-\chi_{\ep})\dbar\varphi_3, \dbar^*N f(1-\chi_{\ep})\dbar\varphi_3\rangle\\
&=\langle f \dbar\varphi_3 , (1-\chi_{\ep})N\dbar \dbar^*N 
f(1-\chi_{\ep})\dbar\varphi_3\rangle \\
&\lesssim \|f\| \|(1-\chi_{\ep})N\dbar \dbar^*N 
f(1-\chi_{\ep})\dbar\varphi_3\|
\end{align*}
Now we will use the fact that  $(1-\chi_{\ep})N$ is compact. This is 
essentially done on pages 3740--3741 in the proof of Theorem 3 in 
\cite{CuckovicSahutoglu09}. The idea is to use compactness of  $\dbar$-Neumann 
operator  locally  to get the following compactness estimate: for every 
$\ep_1>0$ there exists a compact operator $K_{\ep_1}$ on $L^2_{(0,1)}(\D)$ so 
that 
\[\|(1-\chi_{\ep})Nh\|\leq \ep\|h\|+\|K_{\ep_1}h.\|\]
Then using the fact that  $\dbar\dbar^* N$ is a bounded operator in the second 
inequality below,  we get 
\begin{align*}
\|(1-\chi_{\ep})N\dbar \dbar^*N f(1-\chi_{\ep})\dbar\varphi_3\| 
\leq & \ep_1\| \dbar\dbar^* N f(1-\chi_{\ep})\dbar\varphi_3\| 
+\|K_{\ep_1}\dbar \dbar^* Nf(1-\chi_{\ep})\dbar\varphi_3\|  \\ 
\lesssim & \ep_1\|f\|+\|\widetilde{K}_{\ep_1} f\| _{(0,1)}
\end{align*}
where $\widetilde{K}_{\ep_1}=K_{\ep_1}\dbar\dbar^* 
NM_{(1-\chi_{\ep})\dbar\varphi_3}$ is a compact operator. Therefore, 
$\dbar^*N M_{(1-\chi_{\ep})\dbar\varphi_3}$ satisfies a compactness estimate 
and hence it is compact. 
Then 
\[\dbar \varphi_3=\overline{L}_1(\varphi_3)\overline{\omega}_1+\overline{L}
_2(\varphi_3)\overline { \omega} _2.\]
Using the facts that $\overline{L}_2\varphi_3=0$ and $\varphi=\varphi_3$ on 
$b\Dc$ we get 
\[|\dbar\varphi_3|=|\overline{L}_1(\varphi_3)|=|\overline{L}_1(\varphi)| 
\text{ on } b\D.\]  
Therefore, we have 
\[\| \dbar^*N f\chi_{\ep}\dbar\varphi_3\|\leq \|\dbar^*N\| 
\|f\chi_{\ep}\dbar\varphi_3\|\leq \|\dbar^*N\|  
\sup\left\{|\overline{L}_1(\varphi)(z)|:z\in \Pi_{2\ep}\right\} \|f\|\]
So if we let $\ep$ go to zero and use the fact that 
$\|\dbar^*N\|\leq \sqrt{e}\tau_{\D}$, we get 
\[\left\|H_{\varphi}\right\|_e\leq \sqrt{e}\tau_{\D} 
\sup\left\{|\overline{L}_1(\varphi)(z)|:z\in \Pi\right\}.\]
On the other hand, for $p\in \Pi$ there exist $p_j\in \Pi, \xi_j\in \mathbb{D}$, 
and $F_j\in \Gamma_{b\D}$ such that $F_j(\xi_j)=p_j$ and $\lim p_j= p$. We note 
that if $p$ is not on the boundary of a disk then we can choose $p_j=p$ for all $j$. 

Let $F_j(\xi)=(F_{j1}(\xi),F_{j2}(\xi))$ for $\xi\in \mathbb{D}$. Since $\D$ is 
convex in $\C^2$ and we assume that $p_j$ is in a horizontal disk, $F_{j1}$ is 
linear and $F_{j2}$ is constant. The chain rule and the fact that $L_1$ is the 
complex tangential derivative imply that
\[(\varphi \circ F_j)_{\xb}(\xi_j)
=\varphi_{\zb}(p_j)\overline{F_{j1\xi}(\xi_j)}
=\overline{L}_1(\varphi)(p_j)\overline{F'_{j1}(\xi_j)}
=\overline{L}_1(\varphi)(p_j)\overline{F'_{j1}(0)}.\]
Hence  
\[|\overline{L}_1(\varphi)(p_j)|
=\frac{\left|(\varphi\circ F_j)_{\xb}(\xi_j)\right|}{\left|\overline{F'_{j}(0)}
\right|}.\] 
Then, if we take supremum over $j$ we get 
\[|\overline{L}_1(\varphi)(p)| \leq 
\sup_{j}\sup_{\xi \in \mathbb{D}}\left\{ \frac{\left|(\varphi \circ 
F_j)_{\xb}(\xi)\right|}{\left| F'_j(0)\right|} \right\}.\]
Hence, we have 
\[\left\|H_{\varphi}\right\|_e\leq \sup_{F\in \Gamma_{b\D}}\left\{ 
\frac{\sqrt{e}\tau_{\D}}{|F'(0)|}  
\sup_{\xi\in \mathbb{D}}\left\{ |(\varphi \circ F)_{\xb}(\xi)| 
\right\} \right\}.\]
This completes the proof of Theorem \ref{ThmMain}.
\end{proof}
\begin{proof}[Proof of Theorem \ref{ThmBidisk}]
The proof of Theorem \ref{ThmBidisk} is very similar to the first part of the 
proof of Theorem \ref{ThmMain}. So instead of running through the whole 
argument again we will point out where they differ and the modifications 
needed for this proof. Without of loss of generality we may assume that there 
exists $z_0\in \mathbb{D},p\in b\mathbb{D}$ such that $\varphi_{\zb}(z_0,p)\neq 
0$. In this case wedge $W$ is replaced by the disk $\mathbb{D}$ in $w$. Let us 
choose 
a sequence $\{p_j\}\subset \mathbb{D}$ such that $\lim_{j\to \infty}p_j=p$. 
Let us define $f_j(w)=k_{p_j}(w)$ where $k_{p_j}$ is the normalized Bergman 
kernel of $\mathbb{D}$ centered at $p_j$. Then instead of 
\eqref{Eqn0} we have 
\[\|f_j\|_{\mathbb{D}}=1 \text{ and } \|f_j\|=\sqrt{\pi}\]
The decomposition of $\varphi$ is unnecessary in case of the bidisk. Or simply 
we decompose $\varphi=\varphi_1+\varphi_2$ where  $\varphi_1=\varphi$ and 
$\varphi_2=0$.  We choose $D\subset \mathbb{D}\times \{p\}$ such that  
$(z_0,p)\in D$ and $\varphi_{\zb}$ does not vanish on $D$. In a similar fashion 
as in the proof of Theorem 
\ref{ThmMain} we get the following inequality. 
\[\|f_j\|_{\mathbb{D}} \frac{\int_{D}\chi(z) dV(z)}{\|\gamma_{\zb}\|_{D}} 
\leq \left\|H_{\varphi}f_j\right\|_{D\times \mathbb{D}}
\leq \left\|H_{\varphi}f_j\right\|.\]
Then 
\[\left\|H_{\varphi}f_j\right\| \geq 
\frac{V(D)}{\sqrt{2\pi}} \left(\inf_{D}|\varphi_{\zb}|\right).\]
We can estimate the essential norm as in \eqref{Eqn4}
\[\|H_{\varphi}\|_e\geq \limsup_{j\to 
\infty}\frac{\|H_{\varphi}f_j\|}{\sqrt{\pi}}-\ep\]
for an arbitrary $\ep>0$. Furthermore,  we choose $r>0$ so that  
$F(\xi)=(r(\xi-z_0),p)$ and $D=F(\mathbb{D})$. Then   
\[V(D)\inf_{\xi\in D}\{ |\varphi_{\zb}(\xi)| \}= \pi|F'(0)| \inf_{\xi\in
\mathbb{D}}\{ |(\varphi \circ F)_{\xb}(\xi)| \}.\]
Hence 
\[\|H_{\varphi}\|_e\geq \limsup_{j\to 
\infty}\frac{\|H_{\varphi_1}f_j\|}{\sqrt{\pi}}-\ep \geq 
\frac{|F'(0)|}{\sqrt{2}}
\inf_{\xi\in \mathbb{D}}\left\{ |(\varphi \circ F)_{\xb}(\xi)| \right\}-\ep.\]
Now we take supremum over $F$ and let $\ep$ go to zero 
\[ \left\|H_{\varphi}\right\|_e \geq \sup_{F\in 
\Gamma_{b\mathbb{D}^2}}\left\{\frac{|F'(0)|}{\sqrt{2}}
\inf_{\xi\in \mathbb{D}}\left\{ |(\varphi \circ F)_{\xb}(\xi)| \right\} 
\right\}.\]
This completes the proof of Theorem \ref{ThmBidisk}.
\end{proof}

\begin{thebibliography}{AAK71}

\bibitem[AAK71]{AdamjanArovKrein71}
V.~M. Adamjan, D.~Z. Arov, and M.~G. Kre{\u\i}n, \emph{Analytic properties of
  the {S}chmidt pairs of a {H}ankel operator and the generalized
  {S}chur-{T}akagi problem}, Mat. Sb. (N.S.) \textbf{86(128)} (1971), 34--75.

\bibitem[Ass00]{Asserda00}
S.~Asserda, \emph{The essential norm of {H}ankel operator on the {B}ergman
  spaces of strongly pseudoconvex domains}, Integral Equations Operator Theory
  \textbf{36} (2000), no.~4, 379--395.

\bibitem[CS01]{ChenShawBook}
So-Chin Chen and Mei-Chi Shaw, \emph{Partial differential equations in several
  complex variables}, AMS/IP Studies in Advanced Mathematics, vol.~19, American
  Mathematical Society, Providence, RI; International Press, Boston, MA, 2001.

\bibitem[{\v{C}}{\c{S}}09]{CuckovicSahutoglu09}
{\v{Z}}eljko {\v{C}}u{\v{c}}kovi{\'c} and S{\"o}nmez {\c{S}}ahuto{\u{g}}lu,
  \emph{Compactness of {H}ankel operators and analytic discs in the boundary of
  pseudoconvex domains}, J. Funct. Anal. \textbf{256} (2009), no.~11,
  3730--3742.

\bibitem[FS98]{FuStraube98}
Siqi Fu and Emil~J. Straube, \emph{Compactness of the
  {$\overline\partial$}-{N}eumann problem on convex domains}, J. Funct. Anal.
  \textbf{159} (1998), no.~2, 629--641.

\bibitem[JP93]{JarnickiPflugBook}
Marek Jarnicki and Peter Pflug, \emph{Invariant distances and metrics in
  complex analysis}, de Gruyter Expositions in Mathematics, vol.~9, Walter de
  Gruyter \& Co., Berlin, 1993.

\bibitem[LR93]{LinRochberg93}
Peng Lin and Richard Rochberg, \emph{The essential norm of {H}ankel operator on
  the {B}ergman space}, Integral Equations Operator Theory \textbf{17} (1993),
  no.~3, 361--372.

\bibitem[Str10]{StraubeBook}
Emil~J. Straube, \emph{Lectures on the $\mathcal{L}^{2}$-{S}obolev theory of
  the $\overline\partial$-{N}eumann problem}, ESI Lectures in Mathematics and
  Physics, vol.~7, European Mathematical Society (EMS), Z\"urich, 2010.

\end{thebibliography}

\end{document}